\newtheorem{thm}{Theorem}
\newtheorem{lemma}{Lemma}
\title{A note on central moments in $C^*$-algebras}
\author{Zoltán Léka}
\address{HAS Alfr\'ed R\'enyi Institute of Mathematics \\ 1053 Budapest \\ Re\'altanoda 13-15}
\email{leka.zoltan@renyi.mta.hu}
\thanks{This study was partially supported by the Hungarian NSRF (OTKA) grant
no. K104206. and the "Lendület" Program (LP2012-46/2012) of the Hungarian
Academy of Sciences }
\subjclass[2000]{Primary 46L53 ; Secondary 60B99}
\keywords{variance, standard deviation, central moments, C$^*$-algebra }
\date{}
\begin{document}

\begin{abstract}
    We present sharp estimates of the $k^{\rm th}$ central moments of normal elements in $C^*$-algebras. We shall obtain an upper bound for
     the weak moments of general elements as well.
\end{abstract}

\maketitle

\section{Introduction}

Variance and higher-order central moments are well-studied concepts in probability theory and statistics. Our starting point is a well-known lemma that originates from a paper of Murthy and Sethi \cite{MS} and provides an estimate
for the standard variance of real random variables in terms of their largest and smallest values. It says that if $X$ is a real discrete random variable
taking values $x_i$ with probability $p_i$, then
$$ \mbox{Var}(X) = \sum_i p_i x_i^2 - \left(\sum_i p_i x_i \right)^2 \leq {1 \over 4} (M - m)^2 $$ for any constants
$m \leq X \leq M$. Recently, K. Audenaert \cite{A} gave a sharp extension of the result for
complex variables and even for matrices in terms of different types of quantum variances. For instance, he proved
for any $A \in \mathbb{C}^{n \times n}$ that the equality
$$\max \{  \mbox{Tr } D|A- \mbox{Tr } DA|^2 : 0 \leq D \in \mathbb{C}^{n\times n} \mbox{ and }  \mbox{Tr } D = 1\} = \min_{\lambda \in \mathbb{C}} \|A - \lambda I\|^2 $$
 holds, where $\|\cdot\|$  denotes the operator norm (see \cite[Theorem 9]{A}).
 Later R. Bhatia and R. Sharma obtained Audenaert's result applying a
 different method in \cite{BS}. Furthermore, in the setting of $C^*$-algebras the full extension of these variance (or standard deviation) estimates appeared in M. Rieffel's paper \cite{R2}. His approach is based on the proof of a variant of Arveson's distance formula for $C^*$-algebras.
 Quite recently T. Bhattacharyya and P. Grover \cite{BG} presented a new proof of these inequalities exploiting  Birkhoff--James orthogonality.

 Our aim here is to provide sharp inequalities on higher moments of normal elements in $C^*$-algebras. We shall prove that our results are
 sharp for the  $k^{\rm th}$ moments if $k$ is even. The approach that we use is different from the aforementioned ones and rather elementary as well. In the first part of the paper we shall study the (weak) moments of
 Hermitian elements. With these preliminary results at hand we can give the moment estimates for normal elements. In the last section of the paper we shall present
 an inequality for the weak central moments of general $C^*$-algebra elements.

\section{Central moments of Hermitian elements}

 Let $\mathcal{A}$ denote a unital $C^*$-algebra, and let $\mathcal{S}(\mathcal{A})$ stand
for the set of states of $\mathcal{A},$ i.e. the set of positive linear functionals of norm $1$. Obviously, $\mathcal{S}(\mathcal{A})$ is compact in the weak-$*$ topology. The $k$th root of the maximal $k$th central moment of
$a \in \mathcal{A}$ is defined by
  $$ {\Delta}_k(a) = \sup_{\omega \in \mathcal{S}(\mathcal{A})}   \omega(|a - \omega(a)|^k)^{1/k} =  \max_{\omega \in \mathcal{S}(\mathcal{A})} \omega(|a - \omega(a)|^k)^{1/k} . $$
(For any $a \in \mathcal{A},$ $|a| = (a^*a)^{1/2}$ by definition.)

 In \cite[Theorem 3.10]{R2} Rieffel proved for any $a \in \mathcal{A}$ that
                        $$  {\Delta}_2(a) = \min_{\lambda \in \mathbb{C}} \|a-\lambda {\bf 1}\|.$$
  Furthermore, he also observed that the factor norm obeyed the strong Leibniz inequality which led him to prove that the standard deviation is actually a strongly Leibniz seminorm (for the
  details see \cite{R2}).
  L. Molnár \cite{M} proved that if $a$ is self-adjoint, then ${\Delta}_2(a)$ is the same as the factor norm of $a$ in $\mathcal{A}/\mathbb{C}.$
  The proof there exploited a simple geometrical fact to conclude that $${\Delta}_2(a) =  \min_{\lambda \in \mathbb{R}} \|a-\lambda {\bf 1}\| = {1 \over 2} \mbox{diam } \sigma(a),$$ where $\sigma(a)$
  is the spectrum of $a.$

  To get a better understanding of the central moments in $C^*$-algebras, we shall need the following auxiliary concept of moments.

  \bigskip

  \noindent {\bf Definition.} For any $a \in \mathcal{A}$ and $\omega \in S(A),$ let us say that
   $ \omega((a - \omega(a))^k)$ is a {\it weak $k^{\rm th}$ central moment} of
   $a.$

  \bigskip

  Similarly to the notions above, let us introduce the notation
   $$ \mathring{\Delta}_k(a) = \max_{\omega \in \mathcal{S}(\mathcal{A})}  |\omega((a - \omega(a))^k)|^{1/k}.$$
  Now we can give a geometric meaning to the higher-order weak moments of Hermitian elements. From here on we shall denote the maximum norm by
   $$\|p\|_\infty = \max_{0 \leq x \leq 1} |p(x)|$$
  for any polynomial $p$ on the interval $[0,1].$
  The $C^*$-algebra generated by a single element $a$ is denoted by $C^*(a).$

 \begin{thm} Let $a$ be a Hermitian element of a unital $C^*$-algebra $\mathcal{A}$. Then
  $$  \mathring{\Delta}_k(a) = \|p_k\|_\infty^{1/k} \: {\rm diam } \: \sigma(a),$$
 where $p_k(x) = x(1-x)^k + (-1)^{k}x^{k}(1 - x).$ Moreover, equality holds with an $\omega \in \mathcal{S}(C^*(a))$ which
 is the convex sum of two Dirac masses concentrated at the
 farthest spectrum points of $a.$
 \end{thm}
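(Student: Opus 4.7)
The plan is a two-step reduction. Since $a$ is Hermitian, $C^*(a)$ is commutative and, via the continuous functional calculus, isomorphic to $C(\sigma(a))$; states of $C^*(a)$ correspond bijectively to regular Borel probability measures on $\sigma(a)$, and by Hahn--Banach every such state extends to a state of $\mathcal A$. Writing $\bar\mu=\int x\,d\mu$, this yields
\[
\mathring\Delta_k(a)^k \;=\; \sup_{\mu}\;\Big|\int_{\sigma(a)} (x-\bar\mu)^k\,d\mu(x)\Big|,
\]
with $\mu$ ranging over probability measures on $\sigma(a)$. Set $m=\min\sigma(a)$ and $M=\max\sigma(a)$, so $\mathrm{diam}\,\sigma(a)=M-m$ and $\{m,M\}\subset\sigma(a)$.

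For each fixed mean $t\in[m,M]$, let $K_t$ denote the weak-$*$ compact convex set of probability measures on $[m,M]$ with mean $t$. The functional $\mu\mapsto \int(x-t)^k\,d\mu$ is linear on $K_t$, so by Bauer's maximum principle its extrema are attained at extreme points of $K_t$. The two linear constraints (normalization and mean) force every extreme point to have support of at most two atoms. Hence the supremum above is reached by some two-point measure $\mu=(1-s)\delta_a+s\delta_b$ with $a\le b$ in $[m,M]$ and $s\in[0,1]$; for such $\mu$ we have $\bar\mu=(1-s)a+sb$, $a-\bar\mu=-s(b-a)$, $b-\bar\mu=(1-s)(b-a)$, and a direct substitution gives
\[
\int(x-\bar\mu)^k\,d\mu \;=\; (b-a)^k\bigl[\,s(1-s)^k+(-1)^k s^k(1-s)\,\bigr] \;=\; (b-a)^k\,p_k(s).
\]
Since $b-a\le M-m$ and $|p_k(s)|\le\|p_k\|_\infty$, with both extremes simultaneously achievable by choosing $a=m$, $b=M$, and $s=s_*$ an argmax of $|p_k|$ on $[0,1]$, we obtain $\mathring\Delta_k(a)^k=(M-m)^k\|p_k\|_\infty$. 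Taking $k$-th roots proves the identity, and exhibits the optimal state as $\omega=(1-s_*)\delta_m+s_*\delta_M$, a convex combination of Dirac masses at the farthest spectrum points.

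The step I expect to be the most delicate is the extreme-point reduction. For even $k$ one could sidestep it using convexity of $(x-t)^k$ and a Jensen or convex-envelope bound, but for odd $k$ the function is concave on $[m,t]$ and convex on $[t,M]$, so such a direct approach breaks down; the moment-theoretic extreme-point argument handles both parities uniformly. Once the support is known to consist of at most two atoms, the remaining optimization of $|p_k|$ on $[0,1]$ is elementary and the $C^*$-algebraic structure plays no further role.
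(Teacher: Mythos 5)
Your proof is correct, and the lower-bound half is exactly the paper's construction: the state $(1-s_*)\delta_m+s_*\delta_M$ on the (unital) algebra generated by $a$, extended to $\mathcal{A}$, with $s_*$ an argmax of $|p_k|$ on $[0,1]$. The genuine difference is in the upper bound. The paper represents $\mathcal{A}$ on a Hilbert space via GNS, approximates $a$ in norm by simple functions $a_n=\sum_i\gamma_i p_{E_i}$ of its spectral measure, reads $\omega((a_n-\omega(a_n))^k)$ as the $k^{\rm th}$ central moment of a finite discrete random variable, and reduces to two atoms by a finite-dimensional Krein--Milman argument (vertices of the polytope $\left\{t\in\Delta^{m-1}:\sum_i\gamma_i t_i=0\right\}$ lie on edges of the simplex, by \cite[Chapter III, (9.1) Lemma]{Ba}); a limit in $n$ then finishes. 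You instead stay entirely inside the commutative picture, identify states of $C^*(a,\mathbf{1})$ with probability measures on $\sigma(a)$, slice by the mean $t$, and appeal to the infinite-dimensional fact that extreme points of the moment set $K_t$ have support of at most two points. This is shorter and dispenses with the GNS representation, the spectral-measure discretization and the limiting step; the price is that the extreme-point characterization of $K_t$ is itself a theorem (true, but not free) which you assert rather than prove. To make the argument self-contained, either cite it (e.g.\ Winkler's theorem on extreme points of moment sets) or give the standard perturbation proof: if $\mu\in K_t$ charges three pairwise disjoint Borel sets $U_1,U_2,U_3$, there is a nonzero $\nu=\sum_i c_i\,\mu|_{U_i}$ with $\int d\nu=\int x\,d\nu=0$ (two equations, three unknowns), and $\mu\pm\varepsilon\nu\in K_t$ for small $\varepsilon>0$, so $\mu$ is not extreme. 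Two minor blemishes: your $K_t$ consists of measures on $[m,M]$ rather than on $\sigma(a)$, which is harmless for the upper bound but should be said, since the optimizing state must ultimately live on $\sigma(a)$; and the letter $a$ is overloaded as both the algebra element and a support point of $\mu$.
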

  \begin{proof}
    Let $\xi$ be a point where the polynomial $p_k$ attains its maximum on $[0,1]$.
    Let $\varphi_1$ and $\varphi_2$ be multiplicative functionals of the commutative $C^*(a)$  which satisfy the equalities $\varphi_1(a) = \max \{ \lambda : \lambda \in \sigma(a)\}$
    and $\varphi_2(a) = \min \{ \lambda : \lambda \in \sigma(a)\}$ (see e.g. \cite[Proposition 4.2.2]{Pe}). Set the state $\varphi = \xi \varphi_1 + (1-\xi) \varphi_2$ and pick one of its extensions to the whole $\mathcal{A}$ which is also a state. Then  we obtain
    that
    \begin{eqnarray*}
     \begin{split}
       \varphi((a- \varphi(a))^k) &= \sum_{j=0}^k \binom{k}{j} (-1)^j \varphi(a^{k-j}) \varphi(a)^j \\
                                  &= \sum_{j=0}^k  \sum_{l=0}^j \binom{k}{j} \binom{j}{l} (-1)^j  (1-\xi)^{j-l}\xi^{l+1}\varphi_1(a)^{k-j+l}\varphi_2(a)^{j-l} \\
                                     &\quad + \sum_{j=0}^k  \sum_{l=0}^j \binom{k}{j} \binom{j}{l} (-1)^j  (1-\xi)^{j-l+1}\xi^l \varphi_1(a)^l\varphi_2(a)^{k-l}, \\
                                  & \hspace{-2.3cm} \mbox{and with the substitution } m = j - l, \\
                                  &= \sum_{m=0}^k \xi (1-\xi)^{m} \binom{k}{m} (-1)^{m} \varphi_1(a)^{k-m}\varphi_2(a)^{m}  \sum_{l=0}^{k-m} \binom{k-m}{l} (-1)^l \xi^l \\
                                  &\quad + \sum_{l = 0}^k \sum_{j = l}^k \binom{k}{l}  \binom{k-l}{j-l}  (-1)^j  (1-\xi)^{j-l+1}\xi^l \varphi_1(a)^l\varphi_2(a)^{k-l} \\
                                  &= \xi(1-\xi)^k(\varphi_1(a)-\varphi_2(a))^k + (1-\xi)\xi^k(\varphi_2(a)-\varphi_1(a))^k \\
                                  &= p_k(\xi)(\varphi_1(a)-\varphi_2(a))^k.
      \end{split}
     \end{eqnarray*}
   Therefore we get that $$\mathring{\Delta}_k(a)   \geq \|p_k\|_\infty^{1/k} \mbox{ diam } \sigma(a).$$

   To prove the reverse inequality we shall reduce the problem to the finite dimensional setting. The Gelfand--Naimark--Segal construction tells us that
  $\mathcal{A}$ can be realized as a $\ast$-subalgebra of the full operator algebra of a Hilbert space $\mathcal{H}$. Then the Hermitian $a$ has the spectral decomposition
   $$ a = \int_{\sigma(a)}  \lambda \: dp(\lambda),$$ where
   $dp(\lambda)$ denotes the spectral distribution of $a.$ Let us choose a sequence of simple functions $s_n$ which uniformly approximates the
   identity on the spectrum $\sigma(a) \subseteq \mathbb{R}.$ Now fix an $n$ and pick  $s_n = \sum_{i=1}^m \gamma_i \chi_{E_i}$ $(\gamma_i \in \mathbb{R}),$ where  $E_i$-s are disjoint Borel sets
   such that $\displaystyle \cup_{i=1}^m E_i = \sigma(a).$ Set $$a_n =  \int_{\sigma(a)}  s_n \: dp = \sum_{i=1}^m \gamma_i p_{E_i}.$$
   Then for any state $\omega$ on $\mathcal{B}(\mathcal{H}),$ we have
   \begin{eqnarray*}
    \begin{split}
      \omega((a_n - \omega(a_n))^k)  &= \omega \Bigl( (\sum_{i=1}^m (\gamma_i- \omega(a_n)) p_{E_i} ) ^k \Bigr) \\
                                    &= \sum_{i=1}^m (\gamma_i- \omega(a_n)) ^k \omega (p_{E_i})  \\
                                    &= \mathbb{E}((X - \mathbb{E}(X))^k) =: \mu_k^k(X),
    \end{split}
   \end{eqnarray*}
   where $X$ is a discrete random variable on $\{1, \hdots, m \}$ which takes the value $\gamma_i$ with probability $\omega(p_{E_i})$ $(1 \leq i \leq m).$

   We claim that if the $k$th central moment of $X$ is maximal then its distribution must be concentrated on at most two atoms. To see this, from the shift invariance property of $\mu_k$,
   one can simply assume that $\mathbb{E}(X) = 0.$ Hence $\mu_k^k(X)$ is less than or equal to the maximum
   of the linear function of the variables $t_i$ $(1 \leq i \leq m)$
   \begin{eqnarray*}
    \begin{split}
      &\sum_{i=1}^m \gamma_i^k t_i \;
    \mbox{ subject to the linear constraints } \\
    &\sum_{i=1}^m \gamma_i t_i = 0, \quad \sum_{i=1}^m t_i = 1 \mbox{ and } t_1, ..., t_m \geq 0.
    \end{split}
   \end{eqnarray*}
    From the Krein--Milman theorem, it is simple that we can optimize the above problem at the vertices of the convex polytope $\{ (t_1, ..., t_m) \in \Delta^{m-1} : \sum_{i=1}^m \gamma_i t_i = 0  \} \subseteq \mathbb{R}^m,$
    where $\Delta^{m-1}$ denotes the convex hull of the standard basis vectors of $\mathbb{R}^m$ (i.e. $\Delta^{m-1}$ is the standard
    $m-1$-simplex), \cite[Chapter II, (3.4) Corollary]{Ba}. One can instantly see that these vertices, which lie at the intersection of $\Delta^{m-1}$ and an
    affine hyperplane of $\mathbb{R}^m,$ must be on the edges of $\Delta^{m-1}.$ In fact, any vertex of the intersection is the convex
    combination of at most two extreme points of $\Delta^{m-1},$ see \cite[Chapter III, (9.1) Lemma]{Ba}. That is, if $\mu_k$ is maximal then the distribution
     $ (\omega(p_{E_1}), ..., \omega(p_{E_m}))$ must be concentrated on at most two atoms, hence the claim follows.
     
     Now let us say that $\omega(p_{E_i})$ and $\omega(p_{E_j}) = 1 - \omega(p_{E_i})$ are positive (or at least one of them is positive, and $\omega(p_{E_k})= 0$ if $k \neq i,j$ ).
     Since $(\gamma_i - \gamma_j)^{-1}\mu_k(X) =  \mu_k((\gamma_i - \gamma_j)^{-1}(X-\gamma_j)),$ a simple
     computation gives that
     $$  \mu_k^k(X) = (\gamma_i - \gamma_j)^k( \omega(p_{E_i}) (1- \omega(p_{E_i}))^k + (-1)^k\omega(p_{E_i})^k(1-\omega(p_{E_i}))),$$
     whenever $\gamma_i \geq \gamma_j$ holds. We get
       $$  \mu_k^k(X) \leq |\gamma_i - \gamma_j|^k \max_{x \in [0,1]} |p_k(x)|.$$ 
     This also means that the inequality
      $$\omega((a_n - \omega(a_n))^k)^{1/k}  \leq \|p_k\|_\infty^{1/k} \mbox{ diam } \sigma(a_n)$$ clearly holds. Since $a_n \rightarrow a$ in norm, $\mbox{diam } \sigma(a_n) = \mbox{diam ran } s_n \rightarrow \mbox{diam } \sigma(a)$ $(n \rightarrow \infty),$  and $\omega$ is continuous, we conclude that
       $$\mathring{\Delta}_k(a)  \leq \|p_k\|_\infty^{1/k} \mbox{ diam } \sigma(a).$$

     From the construction in the first part of the proof, we get the existence of a convex combination of two Dirac masses concentrated at the largest and the smallest spectrum
     points of $a$ such that equality holds.
     \end{proof}

 {\noindent \bf Remark.} We note that a minor modification of the second part of the proof immediately gives an upper bound on the $k^{\rm th}$ central moment of any
 Hermitian element (for odd $k$). In fact, if $\mathbb{E}(|X - \mathbb{E}(X)|^k)$ is maximal for some real discrete $X$ then its distribution is concentrated on the
 largest and the smallest values of $X.$ As a corollary one can readily prove the inequality
  \begin{equation}
    \Delta_k(a)  \leq  \|q_{k}\|_\infty^{1/k} \: {\rm diam } \: \sigma(a) = 2 \|q_{k}\|_\infty^{1/k}\min_{\lambda \in \mathbb{C}} \|a-\lambda {\bf 1}\|,
  \end{equation}
  where $q_k(x) = x(1-x)^k + x^k(1-x).$ We leave the details to the interested reader.

   \section{Higher-order moments of normal elements}

  Relying on the results of the previous section we can prove some estimates on the central moments of normal elements as well. The
  estimates are sharp when $k$ is even. For the upper bound, we have the following theorem.

 \begin{thm}
   Let $a$ be a normal element of a unital $C^*$-algebra $\mathcal{A}.$ Then
    $$ {\Delta}_k(a)   \leq  2\|q_{k}\|_\infty^{1/k}\min_{\lambda \in \mathbb{C}} \|a-\lambda {\bf 1}\|,$$
   where $q_{k}(x) = x(1-x)^{k} + x^{k}(1 - x).$
 \end{thm}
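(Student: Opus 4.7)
My plan is to reduce to a normalized setting and extract the bound from a scalar convexity argument performed inside the commutative $C^*$-subalgebra generated by the real and imaginary parts of $a$.

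Fix a state $\omega$. Let $R = \min_{\lambda \in \mathbb{C}} \|a - \lambda \mathbf{1}\|$ and let $\lambda^*$ be a Chebyshev center. Replacing $a$ by $e^{-i\theta}(a - \lambda^*\mathbf{1})$ for a suitable $\theta$, one preserves normality, the norm $R$, and the quantity $\omega(|a - \omega(a)|^k)$, while arranging that $\|a\| = R$ and $\omega(a) = r \geq 0$. Write $a = h + ik$ with commuting Hermitians $h, k$; then $\omega(h) = r$, $\omega(k) = 0$, and $\sigma(h) \subseteq [-R, R]$. Normality yields $a^*a = h^2 + k^2 \leq R^2 \mathbf{1}$, so inside the commutative subalgebra $C^*(h, k)$ we have
$$|a - r\mathbf{1}|^2 = (h - r)^2 + k^2 \;\leq\; (h - r)^2 + R^2 - h^2 \;=\; R^2 + r^2 - 2rh.$$
Both sides commute and are positive, so the ordinary scalar monotonicity of $t \mapsto t^{k/2}$ on $[0,\infty)$ lifts this inequality to
$$|a - r\mathbf{1}|^k \;\leq\; (R^2 + r^2 - 2rh)^{k/2}.$$

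The right-hand side is now a function of the single Hermitian $h$. For $k \geq 2$ the second derivative of $t \mapsto (R^2 + r^2 - 2rt)^{k/2}$ is nonnegative on $[-R,R]$, so the function is convex there, and the chord bound (applied through functional calculus, using $\sigma(h) \subseteq [-R, R]$) gives
$$(R^2 + r^2 - 2rh)^{k/2} \;\leq\; \frac{R - h}{2R}(R + r)^k + \frac{R + h}{2R}(R - r)^k.$$
Applying $\omega$, using $\omega(h) = r$, and setting $x = (R + r)/(2R) \in [1/2, 1]$, the right-hand side collapses to
$$(2R)^k \bigl( x(1-x)^k + x^k(1-x) \bigr) = (2R)^k q_k(x) \;\leq\; (2R)^k \|q_k\|_\infty.$$
Taking $k$-th roots and suprema over $\omega$ yields $\Delta_k(a) \leq 2 \|q_k\|_\infty^{1/k} R$, which is exactly the claim.

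The only delicate step is raising the operator inequality to the $(k/2)$-th power: since $t \mapsto t^s$ is not operator monotone for $s > 1$, this would fail in general. What saves the argument is that normality forces both operands to lie in the commutative $C^*(h, k)$, where the order relation is pointwise in the Gelfand picture, so the ordinary scalar monotonicity of $t^{k/2}$ on $[0,\infty)$ is enough. Everything else is then a convex chord estimate and the change of variable that makes $q_k$ appear.
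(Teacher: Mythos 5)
Your proof is correct, and it takes a genuinely different route from the paper's. The paper represents $C^*(a)$ via GNS on a separable Hilbert space, invokes the Weyl--von Neumann--Berg theorem to write $\pi(a)$ as a diagonal operator plus an $\varepsilon$-small compact perturbation, and then passes through two auxiliary diagonal operators $\tilde D$ and $\breve D$ (a doubling trick replacing the eigenvalues $\lambda_i$ by $\pm|\lambda_i|$) in order to reduce to the Hermitian case of Theorem~1; this costs two approximation parameters and an appeal to upper semicontinuity of the spectrum. You instead work entirely inside the commutative unital subalgebra generated by the real and imaginary parts of the (normalized) element: normality gives the exact identity $|a-r\mathbf 1|^2=(h-r)^2+k^2$ together with $k^2\le R^2\mathbf 1-h^2$, and then scalar monotonicity of $t\mapsto t^{k/2}$ (legitimate precisely because everything commutes, as you note), convexity of $t\mapsto(R^2+r^2-2rt)^{k/2}$, and the chord bound at $t=\pm R$ produce $(2R)^k q_k(x)$ directly after applying the state. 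Your argument is shorter and more elementary --- no representation theory, no approximation, and no reliance on Theorem~1 --- though it is specific to the upper bound and does not supply the doubling machinery the paper reuses for the matching lower bound in Theorem~3. Three small points to tidy up: the convexity step needs $k\ge 2$ (for $k=1$ the claim follows from the $k=2$ case via $\omega(|c|)\le\omega(|c|^2)^{1/2}$, since $2\|q_1\|_\infty=2\|q_2\|_\infty^{1/2}=1$); the degenerate case $r=0$ makes $f$ constant, where the chord bound is an identity, so it is harmless but worth a word; and you use $k$ both for the exponent and for the imaginary part of $a$, which should be renamed.
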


 \begin{proof}
   Let $\omega_0 \in \mathcal{S}(\mathcal{A})$ be such that
   $$ {\Delta}_k(a) = \omega_0(|a-\omega_0(a)|^k)^{1/k}.$$
   Let $(\mathcal{H}, \pi, v)$ be the GNS representation of $\omega_0$ restricted to $C^*(a),$ where $\pi$ is contractive and $\mathcal{H}$ is
   separable. Pick an $\varepsilon > 0.$
   By the Weyl--von Neumann--Berg theorem \cite[Theorem 39.4]{C}, there exists a diagonal operator $D$ and a compact perturbation $K$ such that
   $\pi(a) = D+K$ and $\|K\| \leq \varepsilon.$
   Let $r_0(a)$ denote the radius of the smallest circle that contains $\sigma(a).$
   Then $r_0(a) = \min_{\lambda \in \mathbb{C}} \|a-\lambda {\bf 1}\|$ because $a$ is normal.
   We recall that the spectrum as a set function is upper semicontinuous in the usual Hausdorff distance (see e.g. \cite[Theorem 3.4.2]{Au}).
   Moreover, ${\Delta}_k(a) = {\Delta}_k(a + \rm{const} \cdot {\bf 1})$ and $r_0(\pi(a)) \leq r_0(a)$ hold, 
   therefore we can clearly assume that $D = \sum_i \lambda_i P_i,$ where $P_i$-s are
   orthogonal projections, and $\lambda_i \in (r_0(a) + \varepsilon) \mathbb{D}.$
   Now set the diagonal elements
    $$ \tilde{D} := \sum_i (\lambda_i P_i \oplus -{\overline \lambda_i} P_i) \quad \mbox{and} \quad \breve{D} := \sum_i |\lambda_i| (P_i \oplus - P_i)$$ in
   $\mathcal{B}(\mathcal{H} \oplus \mathcal{H}).$
   Obviously, $r_0(\breve{D}) = r_0(\tilde{D}) = r_0(D)$ and $\sigma(D) \subseteq \sigma(\tilde{D}).$ From the Krein--Milman theorem, we know that any state of $C^*(D)$ (and that of $C^*(\tilde{D})$) is in the weak-$*$ closure of the
   convex hull of the Dirac masses on $\sigma(D)$ (and on $\sigma(\tilde{D}),$ respectively), see e.g. \cite[Proposition 2.5.7]{Pe}.  Thus we have by means of the Gelfand transform that
    $$ \Delta_k(D) \leq \Delta_k(\tilde{D}).$$
   For an $\varepsilon' > 0$ we obtain that
    \begin{eqnarray*}
     \begin{split}
         \Delta_k(a)  &\leq (1+\varepsilon') \Delta_k(D)   \\
        &\leq (1+\varepsilon') \Delta_k(\tilde{D}) \\
        &= (1+\varepsilon') \max_{\omega  \in \mathcal{S}(\mathcal{B}(\mathcal{H}))} \: \omega\left(\sum_i |\lambda_i|^k |(P_i \oplus -P_i) - \omega(P_i \oplus -P_i)|^{k}\right)^{1/k} \\
        &= (1+\varepsilon') \Delta_k(\breve{D}), \\
   &\hspace{-1.5cm} \mbox{ and by Theorem 1,}     \\
        &= 2(1+\varepsilon') \|q_{k}\|_\infty^{1/k}\min_{\lambda \in \mathbb{C}} \|\breve{D}-\lambda {\bf 1}\| \\
        &= 2(1+\varepsilon') \|q_{k}\|_\infty^{1/k}\min_{\lambda \in \mathbb{C}} \|D-\lambda {\bf 1}\| \\
        & \leq 2(1+\varepsilon')(1+\varepsilon) \|q_{k}\|_\infty^{1/k}\min_{\lambda \in \mathbb{C}} \|a-\lambda {\bf 1}\|.
     \end{split}
    \end{eqnarray*}
   Note that $\varepsilon' \rightarrow 0$ if $\varepsilon \rightarrow 0,$ so the proof is complete.
 \end{proof}

 To prove a lower bound on $\Delta_k(a)$, we shall need a preliminary lemma.
 We recall that the direct sum $\mathcal{A} \oplus \mathcal{A}$ is an ordinary Banach $*$-algebra which is a $C^*$-algebra with the norm
 $$ \|a\oplus b\| = \max \{ \|a\|, \|b\|\}.$$

 \begin{lemma}
   For any normal $a$ in  $\mathcal{A},$
    $$ \Delta_k(a) = \Delta_k(a\oplus a^*).$$
 \end{lemma}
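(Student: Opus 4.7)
I would prove the equality by establishing the two inequalities separately. The direction $\Delta_k(a) \leq \Delta_k(a \oplus a^*)$ is immediate: for any $\omega_0 \in \mathcal{S}(\mathcal{A})$, the formula $\omega(b \oplus c) := \omega_0(b)$ defines a state of $\mathcal{A} \oplus \mathcal{A}$ satisfying $\omega(a \oplus a^*) = \omega_0(a)$ and $\omega(|a \oplus a^* - \omega(a \oplus a^*)\mathbf{1}|^k) = \omega_0(|a - \omega_0(a)\mathbf{1}|^k)$, so passing to the supremum gives the inequality.

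For the reverse $\Delta_k(a \oplus a^*) \leq \Delta_k(a)$, the key ingredient is the normality identity
\[
|a - \lambda \mathbf{1}|^k = |a^* - \overline{\lambda}\mathbf{1}|^k \qquad (\lambda \in \mathbb{C}),
\]
which follows from $a^*a = aa^*$: both sides equal $\bigl((a^*-\overline{\lambda})(a-\lambda)\bigr)^{k/2}$, since $(a-\lambda)^*(a-\lambda) = (a^*-\overline{\lambda})(a-\lambda) = (a-\lambda)(a^*-\overline{\lambda}) = (a^*-\overline{\lambda})^*(a^*-\overline{\lambda})$. Any state $\omega$ of $\mathcal{A} \oplus \mathcal{A}$ decomposes as $\omega = t\omega_1 \oplus (1-t)\omega_2$ with $\omega_i \in \mathcal{S}(\mathcal{A})$ and $t \in [0,1]$; writing $\mu := \omega(a \oplus a^*) = t\omega_1(a) + (1-t)\omega_2(a^*)$, the identity above yields
\[
\omega(|a \oplus a^* - \mu \mathbf{1}|^k) = t\omega_1(|a - \mu \mathbf{1}|^k) + (1-t)\omega_2(|a - \overline{\mu}\mathbf{1}|^k).
\]

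The task is then to dominate this mixed quantity by $\tilde{\omega}(|a - \tilde{\omega}(a)\mathbf{1}|^k)$ for a single $\tilde{\omega} \in \mathcal{S}(\mathcal{A})$. Passing to the Gelfand picture $C^*(a,\mathbf{1}) \cong C(\sigma(a))$, the restrictions of $\omega_1, \omega_2$ become probability measures $\nu_1, \nu_2$ on $\sigma(a)$, and I would attempt to combine them into a single probability measure on $\sigma(a)$ whose $k$-th central moment dominates the mixed expression. The two natural ingredients are: (i) the shift-invariance $\Delta_k(a) = \Delta_k(a - \zeta\mathbf{1})$, which lets me normalise so that $\mu$ is as simple as possible (e.g.\ real, in which case $\mu = \overline{\mu}$ and the two summands collapse to $\omega'(|a - \mu\mathbf{1}|^k)$ for $\omega' = t\omega_1 + (1-t)\omega_2$); and (ii) an extremality reduction in the spirit of Theorem~1, restricting $\nu_1, \nu_2$ to convex combinations of point masses at extreme spectral points, at which stage the inequality becomes a scalar optimisation.

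\textbf{Main obstacle.} The principal difficulty is precisely the mismatch between $\mu$ and $\overline{\mu}$: a single state on $\mathcal{A}$ produces only one complex mean, so in general there is no direct way to merge the two integrals against $|z-\mu|^k$ and $|z-\overline{\mu}|^k$ into one central moment while remaining supported on $\sigma(a)$. Overcoming this -- most plausibly via a symmetrisation exchanging $\mu \leftrightarrow \overline{\mu}$, coupled with the Krein--Milman reduction used at the end of the proof of Theorem~1 -- will be the technical heart of the argument.
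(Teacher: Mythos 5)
Your easy direction and your set-up for the hard one (decomposing a state of $\mathcal{A}\oplus\mathcal{A}$ as $t\omega_1\oplus(1-t)\omega_2$ and using $|a^*-\overline{\mu}\mathbf{1}|=|a-\mu\mathbf{1}|$ for normal $a$) match the paper's proof, but the proposal stops exactly where that proof has to do its work: you name the mismatch between $\mu$ and $\overline{\mu}$ as the main obstacle and leave it open, and neither of the tools you offer closes it. Shift-invariance cannot in general make $\mu$ real: under $a\mapsto a-\zeta\mathbf{1}$ the mean $\mu=t\omega_1(a)+(1-t)\overline{\omega_2(a)}$ changes by $-(t\zeta+(1-t)\overline{\zeta})$, which is real when $t=1/2$, so an imaginary part of $\mu$ cannot be removed in that case. (The paper instead multiplies $a\oplus a^*$ by a unimodular scalar, which rotates $\mu$ while leaving $|b-\omega(b)|$ unchanged.) More seriously, even granting $\mu\in\mathbb{R}$, your proposed collapse to $\omega'=t\omega_1+(1-t)\omega_2$ yields $\omega'(|a-\mu\mathbf{1}|^k)$ with $\mu=t\omega_1(a)+(1-t)\overline{\omega_2(a)}\neq\omega'(a)$ whenever $\omega_2(a)\notin\mathbb{R}$; this is not a central moment of $a$ with respect to any exhibited state, so it is not dominated by $\Delta_k(a)^k$, and the Krein--Milman reduction does nothing to repair that.

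The step you are missing is the one the paper performs after the rotation: it replaces $\omega_2$ by a \emph{conjugate state}, asserting that $t\omega_1(a)+(1-t)\overline{\omega_2(a)}=\omega_3(a)$ for a single $\omega_3\in\mathcal{S}(\mathcal{A})$ ``because $\mathcal{S}(\mathcal{A})$ is convex and closed under conjugation''; normality and the reality of $\omega_3(a)$ then give $|a^*-\omega_3(a)\mathbf{1}|=|a-\omega_3(a)\mathbf{1}|$ and the whole expression becomes $\omega_3(|a-\omega_3(a)\mathbf{1}|^k)$. So the symmetrisation you guess at must act on the state, not on measures over $\sigma(a)$. You should also take seriously your own suspicion that this is the delicate point: producing a state whose value at $a$ is $\overline{\omega_2(a)}$ requires the numerical range of $a$ to be conjugation-symmetric, and this fails for a general normal element. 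Indeed, for $a=i\mathbf{1}$ one has $\Delta_k(a)=0$ while $\omega=\tfrac12\omega_0\oplus\tfrac12\omega_0$ gives $\omega(a\oplus a^*)=0$ and $\omega(|a\oplus a^*|^k)=1$, so $\Delta_k(a\oplus a^*)\geq 1$; the identity therefore cannot hold in the stated generality, and any complete argument must exploit the normalisation of $a$ (centred so that $\min_\lambda\|a-\lambda\mathbf{1}\|=\|a\|$) that is in force when the lemma is invoked in the proof of Theorem~3. Your plan, as written, neither supplies the conjugate-state device nor the restriction under which it can work.
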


 \begin{proof}
   Obviously, for an $\omega \in \mathcal{S}(\mathcal{A}),$ the functionals $\omega \oplus 0$ and $0 \oplus \omega$ are states on $\mathcal{A} \oplus \mathcal{A},$
   and hence $ \Delta_k(a) \leq \Delta_k(a\oplus a^*).$
   Conversely, let $(\mathcal{H}, \pi)$ be the GNS representation of the $C^*$-algebra $\mathcal{A} \oplus \mathcal{A}$. Fix an $\omega \in \mathcal{S}(\mathcal{A} \oplus \mathcal{A})$
   where $\Delta_k(a\oplus a^*) =  \omega(|a\oplus a^* - \omega(a\oplus a^*)|^k)^{1/k}.$ Since $\Delta_k(a\oplus a^*) = \Delta_k(c(a\oplus a^*))$ for any complex $c$ with modulus $1,$
   there is no loss of generality in assuming that $\omega(a\oplus a^*) \in \mathbb{R}.$
   Let $v \in \mathcal{H}$ be the unit vector for which
   $\omega(\cdot ) = \langle \pi(\cdot)v,v\rangle.$ The ranges of the
   projections $\pi({\bf 1} \oplus 0)$ and $\pi(0 \oplus {\bf 1})$ are orthogonal, hence
   one can write $v$ as the direct sum $x \oplus y,$ where $x = \pi({\bf 1} \oplus 0)v$ and $ y= \pi(0 \oplus {\bf 1})v.$  Furthermore,
    \begin{eqnarray*}
     \begin{split}
     \omega(a\oplus a^*) &= \langle \pi(a \oplus a^*)v, v \rangle \\
                         &=  \langle (\pi(a\oplus 0)\oplus\pi(0 \oplus a^*))(x\oplus y), x\oplus y \rangle \\
                         &= \|x\|^2 \langle \pi(a\oplus 0)x/\|x\|, x/\|x\| \rangle + \|y\|^2 \langle \pi(0 \oplus a)^*y/\|y\|, y/\|y\| \rangle \\
                         &= \|x\|^2 \omega_1(a)  + (1-\|x\|^2) \overline{\omega_2(a)} \\
                         &= \omega_3(a)
     \end{split}
    \end{eqnarray*}
    holds, where $\omega_1, \omega_2$ and $\omega_3 \in \mathcal{S}(\mathcal{A}),$ because $\mathcal{S}(\mathcal{A})$ is convex and closed under conjugation.
    Then
     \begin{eqnarray*}
     \begin{split}
     \omega(|a\oplus a^*-\omega(a\oplus a^*)|^k) &= \omega(|a - \omega_3(a)|^k \oplus |a^*-\omega_3(a)|^k) \\
                         &=  \omega(|a - \omega_3(a)|^k \oplus |(a-\omega_3(a))^*|^k) \\
                         &= \omega_3(|a - \omega_3(a)|^k),
     \end{split}
    \end{eqnarray*}
   which is exactly what we need to get $\Delta_k(a\oplus a^*) \leq \Delta_k(a).$
 \end{proof}

   \begin{thm}
    Let $a$ be a normal element of a unital $C^*$-algebra $\mathcal{A}.$ Then
     $$   2\|p_{k}\|_\infty^{1/k}\min_{\lambda \in \mathbb{C}} \|a-\lambda {\bf 1}\| \leq  \Delta_k(a) .$$
 \end{thm}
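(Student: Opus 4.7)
The plan is to mirror the construction from Theorem 1, executed in the doubled algebra $\mathcal{A}\oplus\mathcal{A}$, where the ``antipodal'' spectral points that drive the two-point extremal measure can be manufactured by pairing $\alpha\in\sigma(a)$ with $\bar\alpha\in\sigma(a^*)$. First I would invoke Lemma 1 to rewrite $\Delta_k(a)=\Delta_k(a\oplus a^*)$, reducing the problem to a lower bound on $\Delta_k(a\oplus a^*)$. Next, using the shift invariance of both $\Delta_k$ and $\min_\lambda\|a-\lambda\mathbf{1}\|$ and the homogeneity $\Delta_k(ca)=|c|\Delta_k(a)$ for $|c|=1$, I would normalize so that the Chebyshev center of $\sigma(a)$ is $0$ (so that $r_0:=\min_\lambda\|a-\lambda\mathbf{1}\|=\|a\|$) and, by rotating, so that some $\alpha\in\sigma(a)$ with $|\alpha|=r_0$ lies on the positive imaginary axis, i.e.\ $\alpha=ir_0$. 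Compactness of $\sigma(a)$ guarantees such an $\alpha$ exists.

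The key geometric point is then that $\bar\alpha=-ir_0\in\overline{\sigma(a)}=\sigma(a^*)\subseteq\sigma(a\oplus a^*)$, so within $\sigma(a\oplus a^*)$ there sit two points separated by \emph{exactly} $2r_0$; this is the analogue, for the normal case, of the ``farthest spectrum points'' used in Theorem 1. I would then repeat the Theorem 1 construction verbatim on $a\oplus a^*$: let $\varphi_1,\varphi_2$ be the Gelfand characters of the commutative $C^*$-algebra $C^*(a\oplus a^*,\mathbf{1})\cong C(\sigma(a\oplus a^*))$ evaluating at $\alpha$ and $\bar\alpha$ respectively, form $\omega_\xi:=\xi\varphi_1+(1-\xi)\varphi_2$, and extend it by Hahn--Banach to a state on $\mathcal{A}\oplus\mathcal{A}$. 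The same binomial expansion as in the proof of Theorem 1 gives
\[\omega_\xi\bigl((a\oplus a^*-\omega_\xi(a\oplus a^*)\mathbf{1})^k\bigr)=p_k(\xi)(\alpha-\bar\alpha)^k=i^k(2r_0)^k p_k(\xi),\]
so $|\omega_\xi(\cdots)|=(2r_0)^k|p_k(\xi)|$, and maximizing $\xi\in[0,1]$ yields
\[\mathring{\Delta}_k(a\oplus a^*)\ge 2\|p_k\|_\infty^{1/k}r_0.\]

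Finally, I would pass from the weak moment to the central moment via $\mathring{\Delta}_k(b)\le\Delta_k(b)$ for normal $b$. This holds because $b-\omega(b)\mathbf{1}$ is normal whenever $b$ is, its $k$-th power is again normal, and for any normal $d$ one has $|d^k|=|d|^k$ (since $(d^k)^*d^k=(d^*d)^k$ by commutativity). Combined with the standard estimate $|\omega(d)|\le\omega(|d|)$ for normal $d$---which is immediate from the spectral theorem, because $\omega$ induces a probability measure on $\sigma(d)$ and $|\int z\,d\mu|\le\int|z|\,d\mu$---this gives the inequality. Lemma 1 then assembles everything:
\[\Delta_k(a)=\Delta_k(a\oplus a^*)\ge\mathring{\Delta}_k(a\oplus a^*)\ge 2\|p_k\|_\infty^{1/k}\min_{\lambda\in\mathbb{C}}\|a-\lambda\mathbf{1}\|.\]

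The main obstacle I foresee is the careful identification of the characters $\varphi_1,\varphi_2$: the points $\alpha,\bar\alpha$ lie in $\sigma(a\oplus a^*)$ but arise via different summands, so one has to verify that each character is well-defined on the commutative algebra generated by $a\oplus a^*$ and that its values on the generator $a\oplus a^*$ really are $\alpha$ and $\bar\alpha$. Once this identification is in place, the computation is a literal transcription of the Hermitian computation from Theorem 1, with the ``diameter'' replaced by the doubled Chebyshev radius $2r_0$.
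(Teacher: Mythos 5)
Your proof is correct, but it transfers the two-point lower bound to $\Delta_k$ by a different mechanism than the paper. Both arguments share the same skeleton: double to $a\oplus a^*$ via Lemma 1, normalize so that the Chebyshev radius $r_0$ is realized at antipodal points $\pm ir_0\in\sigma(a\oplus a^*)$, and evaluate a convex combination of two characters at these points to produce the factor $p_k(\xi)(2ir_0)^k$. The paper, however, quotes Theorem 1 as a black box, so it first passes to the positive element $|{\rm i}\breve a\oplus {\rm i}\breve a^*+{\bf 1}|$ (whose spectrum sits in $[0,2]$), applies Theorem 1 there, and then needs the pointwise inequality $(|f|-c)^k\le|f-c|^k$ together with the observation that the extremal states satisfy $\omega_0(|f|)=\omega_0(f)\ge 0$ in order to get back to $\Delta_k$ of the normal element. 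You instead redo the two-point binomial computation directly on the normal $a\oplus a^*$ --- which is legitimate, since that computation is purely algebraic and never uses that the character values are real --- and then bridge weak to strong moments by the general inequality $\mathring{\Delta}_k(b)\le\Delta_k(b)$ for normal $b$, which you prove correctly from $|d^k|=|d|^k$ and $|\omega(d)|\le\omega(|d|)$ via the Gelfand transform. Your route is arguably cleaner: it isolates a reusable fact and avoids the bookkeeping around $|{\rm i}\breve a\oplus {\rm i}\breve a^*+{\bf 1}|$; the cost is that you cannot cite Theorem 1 verbatim and must state explicitly that its opening computation extends to complex spectral values. The worry you raise about the characters is harmless: $C^*(a\oplus a^*,{\bf 1})\cong C(\sigma(a\oplus a^*))$ because $a\oplus a^*$ is normal, so evaluation at any point of $\sigma(a)\cup\overline{\sigma(a)}$ is a well-defined character taking the required value on the generator. (The degenerate case $r_0=0$ is trivial on both sides and should be noted in passing.)
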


 \begin{proof}
    First, pick a $\lambda_0$ such that $\min_{\lambda \in \mathbb{C}} \|a-\lambda {\bf 1}\| =  \|a-\lambda_0 {\bf 1}\|.$
    Let $$\breve{a} := {a-\lambda_0 {\bf 1} \over \|a-\lambda_0 {\bf 1}\|}. $$ Clearly, $\min_{\lambda \in \mathbb{C}} \|\breve{a} - \lambda {\bf 1}\| = \|\breve{a}\|=1.$
    By rotation around $0,$ we can assume that ${\rm i} \in \sigma(\breve{a}).$
    Since $\sigma(\breve{a}\oplus \breve{a}^*) = \sigma(\breve{a}) \cup \overline{\sigma(\breve{a})},$ the spectral mapping theorem implies that
    $$  \min_{\lambda \in \mathbb{C}} \||{\rm i}(\breve{a}\oplus \breve{a}^*)+{\bf 1}| - \lambda{ \bf 1} \| = {1\over 2} \mbox{diam } \sigma(\breve{a}\oplus \breve{a}^*) = 1.$$
    Next, let $\delta_x$ denote the Dirac mass of $C^*({\rm i}\breve{a}\oplus {\rm i}\breve{a}^*+{\bf 1})$ concentrated at $x$ for some $x \in \sigma({\rm i}\breve{a}\oplus {\rm i}\breve{a}^*+{\bf 1}).$
     From Theorem 1, we know that $\mathring{\Delta}_k(|{\rm i}\breve{a}\oplus {\rm i}\breve{a}^*+{\bf 1}|)$ attains its value on the convex sums of $\delta_0$ and $\delta_2$
    (and their extensions to $\mathcal{A}\oplus \mathcal{A}$).
    On the other hand, for any state $\omega_0 = s \delta_0 + (1-s )\delta_2,$ $0 \leq s \leq 1,$ we have via the Gelfand transform that
    $$ \omega_0 (|{\rm i}\breve{a}\oplus {\rm i}\breve{a}^*+{\bf 1}|) =  \omega_0 ({\rm i}\breve{a}\oplus {\rm i}\breve{a}^*+{\bf 1}) = 2(1-s) \geq 0. $$
    The  inequality $(|f|-c)^k \leq |f-c|^k$ simply holds for any continuous function $f$ and any positive constant function $c$ on the Gelfand space of $C^*({\rm i}\breve{a}\oplus {\rm i}\breve{a}^*+{\bf 1}).$
    Hence we obtain
    \begin{eqnarray*}
     \begin{split}
        2\|p_{k}\|_\infty^{1/k}  \|\breve{a}\| &=  2\|p_{k}\|_\infty^{1/k} \min_{\lambda \in \mathbb{C}} \||{\rm i}\breve{a}\oplus {\rm i}\breve{a}^*+{\bf 1}| - \lambda{ \bf 1} \| \\
       &= \max_{\omega_0 =  s \delta_0 + (1-s) \delta_2} \omega_0((|{\rm i}\breve{a}\oplus {\rm i}\breve{a}^*+{\bf 1}| - \omega_0({\rm i}\breve{a}\oplus{\rm i} \breve{a}^*+{\bf 1}))^k)^{1/k} \\
       &\leq \max_{\omega \in \mathcal{S}(\mathcal{A}\oplus \mathcal{A})}    \omega(|{\rm i}\breve{a}\oplus {\rm i}\breve{a}^*+{\bf 1} - \omega({\rm i}\breve{a}\oplus {\rm i}\breve{a}^*+{\bf 1})|^k)^{1/k} \\
       &= \Delta_k(\breve{a}\oplus\breve{a}^*), \\
     \end{split}
    \end{eqnarray*}
    where we used Theorem 1 in the second equality. Finally, an application of the previous lemma completes the proof.
 \end{proof}

  The estimate for the standard variance of normal matrices in the full matrix algebra $M_n(\mathbb{C})$ first appeared in \cite[Theorem 8]{A}.
  From the above theorems we arrive at the main result of the section.

 \begin{thm}
   For any normal element $a$ in a unital $C^*$-algebra $\mathcal{A}$ and $k \in 2\mathbb{N},$
             $$  \Delta_k(a) =   2\|p_{k}\|_\infty^{1/k}\min_{\lambda \in \mathbb{C}} \|a-\lambda {\bf 1}\| .$$
 \end{thm}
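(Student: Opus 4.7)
The plan is to observe that Theorem 4 is an immediate consequence of Theorems 2 and 3 once we identify $\|p_k\|_\infty$ with $\|q_k\|_\infty$ for even $k$. Specifically, recall that
$$ p_k(x) = x(1-x)^k + (-1)^k x^k(1-x), \qquad q_k(x) = x(1-x)^k + x^k(1-x). $$
When $k \in 2\mathbb{N}$, the factor $(-1)^k$ equals $1$, so $p_k$ and $q_k$ coincide as polynomials on $[0,1]$, and hence $\|p_k\|_\infty = \|q_k\|_\infty$.

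With this identification in hand, the first step is to invoke Theorem 3, which yields the lower bound
$$ 2\|p_k\|_\infty^{1/k} \min_{\lambda \in \mathbb{C}} \|a - \lambda \mathbf{1}\| \leq \Delta_k(a) $$
for any normal $a$, valid for all $k$ (no parity restriction was needed there). The second step is to invoke Theorem 2, which yields the matching upper bound
$$ \Delta_k(a) \leq 2\|q_k\|_\infty^{1/k} \min_{\lambda \in \mathbb{C}} \|a - \lambda \mathbf{1}\|. $$
Substituting $\|q_k\|_\infty = \|p_k\|_\infty$ (since $k$ is even) into the upper bound closes the sandwich and gives the claimed equality.

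There is no real obstacle: all the analytic work has been done in Theorems 1--3 and in the lemma linking $\Delta_k(a)$ to $\Delta_k(a \oplus a^*)$. The only subtle point is the parity observation above, which is precisely why the equality is restricted to $k \in 2\mathbb{N}$; for odd $k$, the polynomials $p_k$ and $q_k$ differ (indeed $p_k$ is a difference rather than a sum and generally has a strictly smaller supremum norm), so the two bounds no longer coincide and the argument only produces a strict sandwich rather than an equality.
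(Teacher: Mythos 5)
Your proposal is correct and is exactly the paper's argument: the author states Theorem 4 as an immediate consequence of Theorems 2 and 3, and the only observation needed is that $p_k=q_k$ when $k$ is even, which you make explicitly. Nothing is missing.
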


 Obviously, the equality $\mathring{\Delta}_k(a) = \Delta_k(a)$ follows for self-adjoint $a$ and even $k.$
 However, one can construct an example which shows that $$\mathring{\Delta}_k(a) < \Delta_k(a)$$ may happen if $a$ is normal and $k$ is even.
 
 \bigskip
 
 {\noindent \bf Example 1.} Let $k=2$ and define the normal matrix
  $$ A = \begin{pmatrix}
                1 & 0 & 0 \cr
                0 & e^{2i\pi/3} & 0 \cr
                0 & 0 & e^{4i\pi/3} \cr
              \end{pmatrix}.
  $$
  From the previous theorem in $M_3(\mathbb{C}),$ we get
  $$  \Delta_2(A) =   2\|p_{2}\|_\infty^{1/2} =  1.$$
  On the other hand,
              $$ \mathring{\Delta}_2(A) \leq \max_{X \colon {\tiny \mbox{ran }} X \subseteq \sigma(A)} \: |\mathbb{E}(X^2) - \mathbb{E}(X)^2|^{1/2}.$$
  Some calculations give that the function $(p_1, p_2, p_3) \mapsto |\mathbb{E}(X^2) - \mathbb{E}(X)^2|$ defined on the probability simplex attains its maximum when
  $X$ is Bernoulli distributed with parameter $1/2.$  Namely,
   $$ \mathring{\Delta}_2(A) \leq {\sqrt{3} \over 2}.$$

\bigskip

  \noindent {\bf Example 2.} If $k$ is odd then it is straightforward to see that Theorem 4 does not hold. In fact, let $k = 3$ and consider
       $$ A = \begin{pmatrix}
                1 & 0 & 0 \cr
                0 & i & 0 \cr
                0 & 0 & 0 \cr
              \end{pmatrix}.
  $$
  Set the state $\omega(X)  = {1 \over 3} \mbox{Tr } X$ on $M_3(\mathbb{C}).$ Then
   $$ \Delta_3(A) \geq |\omega((A-\omega(A))^3)|^{1/3} =
           {\sqrt[3]{50} \over 3\sqrt{2}} \approx 0.868,$$
  while $\displaystyle 2\|p_3(x)\|_\infty^{1/3}\min_{\lambda \in \mathbb{C}} \|A-\lambda I\| = {\sqrt{2} \over \sqrt[6]{108}} \approx 0.648.$

 \section{Notes on general elements}

 In this last section we make few remarks on the central moments of general elements.
 It would be interesting to know whether Rieffel's theorem remains true for larger even moments. Does the equality
 $$ \Delta_k(a) = 2\|q_{k}\|_\infty^{1/k}\min_{\lambda \in \mathbb{C}} \|a-\lambda {\bf 1}\| $$ follow for $k \in 2\mathbb{N},$ where $\|q_{k}\|_\infty$
 denotes the largest $k^{\rm th}$ central moments of the Bernoulli distribution?
 We do not even know the answer in the finite dimensional case, or when $\mathcal{A} = M_n(\mathbb{C}).$
 In the noncommutative setting, we can prove the following weaker statement relying on the von Neumann inequality.

  \begin{thm} Let $\mathcal{A}$ be a unital $C^*$-algebra and let a $\in \mathcal{A}.$ Then for any $k,$ we have
  $$ \mathring{\Delta}_k(a)  \leq  2\|q_{k}\|_\infty^{1/k}\min_{\lambda \in \mathbb{C}} \|a-\lambda {\bf 1}\|,$$
 where $q_{k}(x) = x(1-x)^{k} + x^{k}(1 - x).$
 \end{thm}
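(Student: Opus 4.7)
The plan is to bootstrap Theorem 2 (the normal case) via the GNS construction, Sz.--Nagy's unitary dilation, and the spectral theorem for unitaries, so that the bound is ultimately inherited from the coordinate function on the commutative algebra $C(\partial\mathbb{D})$. This is precisely where von Neumann's inequality enters, since Sz.--Nagy dilation is the standard Hilbert-space route to it.

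First I normalize. Pick $\lambda_{0}$ with $r:=\|a-\lambda_{0}\mathbf{1}\|=\min_{\lambda}\|a-\lambda\mathbf{1}\|$; replacing $a$ by $(a-\lambda_{0}\mathbf{1})/r$ is harmless since $\omega((a-\omega(a))^{k})$ is translation-invariant and homogeneous of degree $k$. Thus I may assume $\|a\|=\min_{\lambda}\|a-\lambda\mathbf{1}\|=1$. Fix a state $\omega\in\mathcal{S}(\mathcal{A})$, set $c:=\omega(a)\in\overline{\mathbb{D}}$, and let $(\mathcal{H},\pi,v)$ be the GNS triple for $\omega$. Then $T:=\pi(a)$ is a contraction on $\mathcal{H}$ with $\langle Tv,v\rangle=c$ and $\omega(a^{n})=\langle T^{n}v,v\rangle$ for all $n\geq 0$. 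By Sz.--Nagy, there exist a Hilbert space $\mathcal{K}\supseteq\mathcal{H}$ and a unitary $U\in\mathcal{B}(\mathcal{K})$ with $T^{n}=P_{\mathcal{H}}U^{n}|_{\mathcal{H}}$ for every $n\geq 0$; in particular $\langle T^{n}v,v\rangle=\langle U^{n}v,v\rangle$. The spectral theorem for $U$ produces a probability measure $\mu$ on $\partial\mathbb{D}$ such that $\int z^{n}\,d\mu=\langle U^{n}v,v\rangle$, and in particular $\int z\,d\mu=c$.

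Expanding $(T-cI)^{k}$ as a polynomial in $T$ and using the matched moments,
$$\omega((a-c)^{k})=\langle (T-cI)^{k}v,v\rangle=\int_{\partial\mathbb{D}}(z-c)^{k}\,d\mu(z),$$
and the triangle inequality gives $|\omega((a-c)^{k})|\leq\int|z-c|^{k}\,d\mu$. The right-hand side is $\rho(|z-\rho(z)|^{k})$, where $\rho$ is the state on the commutative $C^{*}$-algebra $C(\partial\mathbb{D})$ induced by $\mu$ and $z$ is the (normal) coordinate function. Since $\min_{\lambda}\|z-\lambda\|_{\infty}=1$, applying Theorem 2 to $z\in C(\partial\mathbb{D})$ yields $\rho(|z-\rho(z)|^{k})\leq 2^{k}\|q_{k}\|_{\infty}$. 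Taking $k$-th roots and supremizing over $\omega$, then reinstating the factor $r$, gives $\mathring{\Delta}_{k}(a)\leq 2\|q_{k}\|_{\infty}^{1/k}\min_{\lambda}\|a-\lambda\mathbf{1}\|$.

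The subtle point, and the one place I would be careful, is that the Sz.--Nagy intertwining $T^{n}=P_{\mathcal{H}}U^{n}|_{\mathcal{H}}$ holds only for $n\geq 0$ and does \emph{not} imply the operator identity $(T-cI)^{k}=P_{\mathcal{H}}(U-cI)^{k}|_{\mathcal{H}}$. What I actually need is only the scalar equality $\langle(T-cI)^{k}v,v\rangle=\langle(U-cI)^{k}v,v\rangle$, which follows from expanding and observing that the expansion involves no adjoints of $T$; this is precisely the reason the argument goes through for a possibly non-normal $a$ but still delivers a bound phrased in terms of $\min_{\lambda}\|a-\lambda\mathbf{1}\|$ rather than $\|a\|$ alone.
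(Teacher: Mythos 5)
Your proof is correct and follows essentially the same route as the paper: normalize $a$ to a contraction, dominate its moments by those of a unitary whose spectrum is the unit circle, and then invoke Theorem 2 for that normal element. The only real difference is that the paper cites von Neumann's inequality directly and transfers states from the bilateral shift's algebra, whereas you re-derive the needed moment identity by hand from the Sz.--Nagy dilation and the spectral measure of the dilating unitary --- a self-contained unpacking of the same mechanism, including the correct observation that only non-negative powers (no adjoints) are needed.
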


 \begin{proof}
    Choose $\lambda_0$ such that $\min_{\lambda \in \mathbb{C}} \|a-\lambda {\bf 1}\| = \|a-\lambda_0 {\bf 1}\|$ holds. Define
     $$\tilde{a} := {a - \lambda_0 {\bf 1} \over \|a - \lambda_0 \bf{1}\|}.$$ Then von Neumann's inequality shows that
       $$ \|p(\tilde{a})\| \leq \sup_{z \in \mathbb{D}} |p(z)| =  \|p(U)\|,$$ for any polynomial $p,$ where $U$ stands for the bilateral shift on $\ell^2(\mathbb{Z}).$ (Recall that $\sigma(U)$ is the unit circle.)
       We note that any state $\omega$ on $\mathcal{A}$ gives rise to a state $\tilde{\omega}$ on $ \mathcal{B}(\ell^2(\mathbb{Z}))$ such that $\omega(p(\tilde{a})) = \tilde{\omega}(p(u))$
       holds for any polynomial $p$ (see \cite[Proposition 33.10]{C}). Thus $\mathring{\Delta}_k(\tilde{a}) \leq  \mathring{\Delta}_k(U).$
       Moreover, from Theorem 2 we get
    \begin{eqnarray*}
     \begin{split}
       \mathring{\Delta}_k(\tilde{a}) \leq  \mathring{\Delta}_k(U) \leq   2 \|q_k\|_\infty^{1/k} \min_{\lambda \in \mathbb{C}} \|U - \lambda I\| =  2 \|q_k\|_\infty^{1/k}
     \end{split}
    \end{eqnarray*}
    which is what we intended to show.
  \end{proof}

  Finally, our next example shows that the above inequality can be sharp for special elements, at least if $k$ is even.

  \bigskip

  \noindent {\bf Example 3.} Let us assume that $S$ is a non-invertible isometry on a Hilbert space $\mathcal{H}.$ Then we can easily see that
  $  2\|p_k\|_\infty^{1/k}\min_{\lambda \in \mathbb{C}} \|S-\lambda I\|\leq \mathring{\Delta}(S) ,$
   where $p_{k}(x) = x(1-x)^{k} + (-1)^kx^{k}(1 - x),$ as usual. Consider the Hermitian matrix
   $$ A = \begin{pmatrix}
            1 & 0 \cr
            0 & -1
          \end{pmatrix} \in M_2(\mathbb{C}).
$$
 Obviously, $\sigma(S) = \overline{\mathbb{D}}.$ The spectral mapping theorem gives that $\sigma(p(A)) \subseteq \sigma(p(S))$ hence $\|p(A)\| \leq \|p(S)\|$ follows for any polynomial $p$ (see e.g. \cite[Theorem 3.2.6]{Au}).
 This means that any state $\omega$ of $M_2(\mathbb{C})$ defines a state $\tilde{\omega}$ on $\mathcal{B}(\mathcal{H})$ such that $\omega(p(S)) = \tilde{\omega}(p(A))$
       holds for any $p$ (\cite[Proposition 33.10]{C}). Therefore,  $\mathring{\Delta}_k(A) \leq   \mathring{\Delta}_k(S).$ Combining this inequality with Theorem 1, we conclude that
    \begin{eqnarray*}
     \begin{split}
        {2} \|p_k\|_\infty^{1/k} \min_{\lambda \in \mathbb{C}} \|S - \lambda I\| &=  {2} \|p_k\|_\infty^{1/k}\min_{\lambda \in \mathbb{C}} \|A - \lambda I\| \\
          &=   \|p_k\|_\infty^{1/k} \mbox{ diam } \sigma(A) \\
          &=   \mathring{\Delta}_k(A)  \\
          &\leq   \mathring{\Delta}_k(S).
     \end{split}
    \end{eqnarray*}
    Now with Theorem 5 at hand we obtain for any non-invertible isometry $S$ and $k \in 2\mathbb{N}$ that $$\mathring{\Delta}_k(S) = {2} \|q_k\|_\infty^{1/k}.$$  
  We point out that Rieffel's theorem (\cite[Theorem 3.10]{R2}, \cite[Corollary 4.2]{BG}) gives $\Delta_2(S) = \mathring{\Delta}_2(S)$ in this special case (compare it with Example 1). Notice that $S$ is Birkhoff--James orthogonal to
  $I;$ i.e. $\|S\| = \min_{\lambda \in \mathbb{C}} \|S - \lambda I \|,$ hence there exists a state $\varphi$ on $\mathcal{B}(\mathcal{H})$ such that $\varphi(I) = 1$ and $\varphi(S) = 0$ (\cite[Corollary 3.3]{BG}).
  Actually with this state we have the equality
    $$  \varphi(|S - \varphi(S)|^2)^{1/2} = \Delta_2(S) = 1.$$
  But if $4 \leq k \in 2\mathbb{N},$ the state $\varphi$ determined by the Birkhoff--James orthogonality does not relate to the higher-order moments because $\mathring{\Delta}_k(S) > 1$ while
  $ \varphi(|S - \varphi(S)|^k)^{1/k} = 1.$

 \section{Acknowledgement}

  I would like to thank Prof. Lajos Molnár for stimulating discussions and questions on the higher-order moments in $C^*$-algebras. I would like to also thank
  the referee for his/her careful reading of the original manuscript and his/her suggestions which made the paper more transparent.

  \bigskip

 \end{document}